\newtheorem{theorem}{Theorem}[section]
\newtheorem{lemma}[theorem]{Lemma}
\theoremstyle{definition}
\newtheorem{definition}[theorem]{Definition}
\newtheorem{remark}{Remark}[section]
\newenvironment{notation and conventions}{\textbf{Notation and conventions.}}{ }
\DeclareFontFamily{U}{rsf}{} \DeclareFontShape{U}{rsf}{m}{n}{ <5> <6> rsfs5 <7> <8> <9> rsfs7 <10-> rsfs10}{}
\DeclareMathAlphabet\Scr{U}{rsf}{m}{n}
\definecolor{pink}{rgb}{1,0,1}
\begin{document}

\begin{center}
\baselineskip=14pt{\LARGE
 On tadpole relations via Verdier specialization\\
}
\vspace{1.5cm}
{\large  James Fullwood
  } \\
\vspace{.6 cm}

Institute of Mathematical Research, The University of Hong Kong, Pok Fu Lam Road, Hong Kong.\\

\end{center}

\vspace{1cm}
\begin{center}

{\bf Abstract}
\vspace{.3 cm}
\end{center}

{\small
Using the construct of `Verdier specialization', we provide a purely mathematical derivation of Chern class identities which upon integration yield the D3-brane tadpole relations coming from the equivalence between F-theory and associated weakly coupled type IIB  orientifold limits. In particular, we find that all Chern class identities associated with weak coupling limits appearing in the physics literature are manifestations of a relative version of Verdier's specialization formula.}

\tableofcontents{}






\section{Introduction}\label{intro}
Let $\varphi:Y\to B$ be an elliptic fibration over a smooth complete complex algebraic variety of arbitrary dimension whose total space $Y$ is a smooth hypersurface in a $\mathbb{P}^2$-bundle $\pi:\mathbb{P}(\mathscr{E})\to B$ given by a Weierstra{\ss} equation
\[
Y:\left(y^2=x^3+fxz^2+gz^3\right)\subset \mathbb{P}(\mathscr{E}).
\]
The coefficients $f$ and $g$ are then sections of line bundles over $B$ so that the fiber $\varphi^{-1}(b)$ over $b\in B$ is given by a Weierstra{\ss} equation with coefficients $f(b)$ and $g(b)$. The fibers of $\varphi$ will degenerate to singular cubics over the discriminant hypersurface
\[
\Delta:(4f^3+27g^2=0)\subset B,
\] 
with a generic fiber over $\Delta$ being a nodal cubic which will degenerate further to a cuspidal cubic over the codimension two locus
\[
C:(f=g=0)\subset B,
\]
which in fact coincides with the singular locus of $\Delta$ in the case that the differentials $df$ and $dg$ are everywhere linearly independent. 

In \cite{AE1}, string-theoretic arguments (i.e., S-duality between F-theory and associated weakly-coupled type IIB orientifold limits) led to the discovery of an interesting identity in the Chow group $A_*B$ between the Chern-Schwartz-MacPherson (or simply CSM) class\footnote{We review the theory of CSM classes of constructible functions in \S\ref{CSMV}.} of the constructible function $\mathbbm{1}_{\Delta}+\mathbbm{1}_C$ and the CSM class of a constructible function which seems to have no relation to $\mathbbm{1}_{\Delta}+\mathbbm{1}_C$, namely
\begin{equation}\label{MI}
c_{\text{SM}}\left(\mathbbm{1}_{\Delta}+\mathbbm{1}_C\right)=c_{\text{SM}}\left(2\mathbbm{1}_O+\mathbbm{1}_D-\mathbbm{1}_S\right),
\end{equation}
where the varieties $O$, $D$ and $S$ arise when viewing $Y$ as a smooth deformation (parametrized by a disk in $\mathbb{C}$) of a certain singular variety $Y_0$. In the case that $Y$ is a Calabi-Yau fourfold, the degree zero component of both sides of equation \ref{MI} yield a numerical relation predicted by string dualities which reflect the equivalence of D3-brane charge between F-theory and its weakly-coupled orientifold limit. Such relations equating charges between dual theories are often referred to in the physics literature as `tadpole relations'. 

In subsequent works \cite{AE2}\cite{CCG}\cite{EFY}\cite{EJY}, similar identities associated with other elliptic fibrations not in Weierstra{\ss} form were also arrived at via physical considerations. While the identities were indeed shown to hold in all these cases, a precise mathematical explanation as to why such identities should exist was lacking. In Remark~4.5 of \cite{AE1}, the construct known as `Verdier specialization' --which associates constructible functions on a family over a disk with constructible functions on its central fiber-- was used \emph{a posteriori} to sketch a derivation of identity \ref{MI} solely from mathematical principles, but in an indirect manner that would have been unclear if the identity was not known to already hold in the first place. In any case, it was this example which motivated us to employ Verdier specialization to give a top-down approach to the explanation of the appearance of such identities. As such, we find all Chern class identities appearing in \cite{AE1}\cite{AE2}\cite{CCG}\cite{EFY}\cite{EJY} to be manifestations of the following
\begin{theorem}\label{VT}
Let $\varphi:Y\to B$ be an elliptic fibration whose total space $Y$ is a smooth zero-scheme of a section of a vector bundle $\mathscr{V}\to \mathbb{P}(\mathscr{E})$, and let $s_t$ be a smoothly varying family of sections of $\mathscr{V}\to \mathbb{P}(\mathscr{E})$  over an open disk $\mathscr{D}$ about the origin in $\mathbb{C}$ whose zero-schemes give rise to a family $\mathscr{Y}\to \mathscr{D}$, such that the total space $\mathscr{Y}$ is smooth over $\mathscr{D}\setminus\{0\}$. Denote the central fiber of the family by $\varphi_0:Y_0\to B$. Then
\begin{equation}\label{mt}
c_{\emph{SM}}\left(\varphi_*\mathbbm{1}_{Y}\right)=c_{\emph{SM}}\left({\varphi_0}_*\sigma \mathbbm{1}_{\mathscr{Y}}\right),
\end{equation}
where $\varphi_*\mathbbm{1}_{Y}$, ${\varphi_0}_*\sigma \mathbbm{1}_{\mathscr{Y}}$ are the pushforwards to $B$ of the characteristic function $\mathbbm{1}_{Y}$ and Verdier's specialization $\sigma \mathbbm{1}_{\mathscr{Y}}$ of the characteristic function of the family $\mathscr{Y}$.
\end{theorem}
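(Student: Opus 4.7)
My plan is to prove the stronger statement that the two constructible functions $\varphi_{*}\mathbbm{1}_{Y}$ and $\varphi_{0*}\sigma\mathbbm{1}_{\mathscr{Y}}$ coincide pointwise on $B$, from which equation \eqref{mt} follows at once by taking CSM classes of both sides. The entire argument rests on the compatibility of Verdier specialization with proper pushforward, together with its triviality on a constant family.

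First I would fit the family into a commutative diagram over the disk. Let $\Phi:\mathscr{Y}\to B\times\mathscr{D}$ be the composition of the inclusion $\mathscr{Y}\subset\mathbb{P}(\mathscr{E})\times\mathscr{D}$ with $\pi\times\mathrm{id}$, where $\pi:\mathbb{P}(\mathscr{E})\to B$ denotes the projective-bundle projection. Since $\pi$ is proper, so is $\Phi$, and the restriction of $\Phi$ to the central fiber is precisely $\varphi_{0}$. Both $\mathscr{Y}$ and $B\times\mathscr{D}$ are then families over $\mathscr{D}$, and Verdier specialization $\sigma$ supplies morphisms on constructible functions from each total space to its central fiber.

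The key step invokes two properties of $\sigma$. First, Verdier specialization commutes with proper pushforward, i.e. $\sigma\circ\Phi_{*}=\varphi_{0*}\circ\sigma$, which is the constructible-function shadow of the fact that nearby cycles commute with proper direct image. Second, for the trivial family $B\times\mathscr{D}\to\mathscr{D}$ the specialization of any constructible function whose restriction to $B\times\mathscr{D}^{*}$ is the pullback of some $g$ from $B$ returns exactly $g$ on $B\times\{0\}\cong B$; this is because the Milnor fiber of the projection to $\mathscr{D}$ at a point $(b_{0},0)$ is a small ball in $B$, which is contractible. Applying the first property to $\mathbbm{1}_{\mathscr{Y}}$ yields $\sigma(\Phi_{*}\mathbbm{1}_{\mathscr{Y}})=\varphi_{0*}\sigma\mathbbm{1}_{\mathscr{Y}}$, while smoothness of $\mathscr{Y}\to\mathscr{D}$ over $\mathscr{D}^{*}$, combined with an Ehresmann/Thom isotopy argument, identifies the restriction of $\Phi_{*}\mathbbm{1}_{\mathscr{Y}}$ to $B\times\mathscr{D}^{*}$ with the pullback of $\varphi_{*}\mathbbm{1}_{Y}$; the second property then gives $\sigma(\Phi_{*}\mathbbm{1}_{\mathscr{Y}})=\varphi_{*}\mathbbm{1}_{Y}$. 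Equating the two expressions produces $\varphi_{*}\mathbbm{1}_{Y}=\varphi_{0*}\sigma\mathbbm{1}_{\mathscr{Y}}$ in the group of constructible functions on $B$, and \eqref{mt} follows immediately.

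The main obstacle is the verification that $\Phi_{*}\mathbbm{1}_{\mathscr{Y}}|_{B\times\mathscr{D}^{*}}$ really is pulled back from $B$: this is the pointwise assertion that $\chi(\varphi_{t}^{-1}(b))$ is independent of $t\neq 0$ for each fixed $b\in B$, which is \emph{not} a formal consequence of $\mathscr{Y}\to\mathscr{D}^{*}$ being a submersion. Handling it calls for stratifying $B$ according to the degeneration type of the elliptic fiber (smooth, nodal, cuspidal, and the various Kodaira strata) and checking that this stratification varies trivially with $t$ for $t\neq 0$; this is an equisingularity statement that follows from smoothness of $\mathscr{Y}$ over $\mathscr{D}^{*}$ by applying standard stratification theory (Whitney/Thom) to the stratified proper map $\mathscr{Y}|_{\mathscr{D}^{*}}\to B\times\mathscr{D}^{*}$ given by $\Phi$.
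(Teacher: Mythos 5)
Your strategy cannot work, because the ``stronger statement'' you set out to prove --- that $\varphi_*\mathbbm{1}_Y$ and ${\varphi_0}_*\sigma\mathbbm{1}_{\mathscr{Y}}$ coincide as constructible functions on $B$ --- is false, and the paper's own Weierstra{\ss} computation is a counterexample. There one finds $\varphi_*\mathbbm{1}_Y=\mathbbm{1}_{\Delta}+\mathbbm{1}_C$ while ${\varphi_0}_*\sigma\mathbbm{1}_{\mathscr{Y}}=2\mathbbm{1}_O+\mathbbm{1}_D-\mathbbm{1}_S$: at a general point $b$ of $O$ (so $h(b)=0$ but $4f^3+27g^2$ does not vanish at $b$ for the fixed parameter $t_0$) the left-hand function is $0$ and the right-hand function is $2$. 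The two functions have different supports and different values; the entire content of the theorem is that only their \emph{CSM classes} in $A_*B$ agree, which is why the identity is surprising in the first place. The two properties of $\sigma$ you invoke (compatibility with proper pushforward, and triviality of specialization of functions pulled back from $B$ on the constant family $B\times\mathscr{D}$) are both fine; the fatal step is the third one.

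Concretely, the step that fails is the claim that $\Phi_*\mathbbm{1}_{\mathscr{Y}}$ restricted to $B\times\mathscr{D}^*$ is pulled back from $B$, i.e.\ that $\chi(\varphi_t^{-1}(b))$ is independent of $t\neq 0$ for each fixed $b$. Smoothness of $\mathscr{Y}$ over $\mathscr{D}^*$ (equivalently, smoothness of each $Y_t$, $t\neq0$) trivializes the spaces $Y_t$ up to homeomorphism, but it does not trivialize the family of \emph{maps} $\varphi_t:Y_t\to B$: the discriminant hypersurfaces $\Delta_t\subset B$ move with $t$ (in the Sen family, $\Delta_t$ is cut out by $4(-3h^2+t\eta)^3+27(-2h^3+th\eta+t^2\chi)^2$), so a fixed $b$ lies on $\Delta_t$ only for special values of $t$ and $\chi(\varphi_t^{-1}(b))$ jumps between $0$, $1$, $2$ as $t$ varies; no Whitney/Thom argument can produce the $t$-independent stratification of $B$ you need, because it does not exist. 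What your first two steps correctly give is $\sigma(\Phi_*\mathbbm{1}_{\mathscr{Y}})={\varphi_0}_*\sigma\mathbbm{1}_{\mathscr{Y}}$, and the value of the left side at $b$ is the Euler characteristic of $\varphi_t^{-1}$ of a small ball about $b$ for small $t\neq0$ --- not of the single fiber over $b$ --- which is exactly why the right-hand side sees the limiting configuration $O$, $D$, $S$ rather than $\Delta$, $C$. The paper's proof avoids this issue by never working at the level of constructible functions on $B$: it applies Verdier's formula \ref{VF} to identify $c_{\text{SM}}(\sigma\mathbbm{1}_{\mathscr{Y}})$ with the Chern--Fulton class of $Y_0$, uses that $Y$ and $Y_0$ are zero schemes of sections of the same bundle $\mathscr{V}$ (so $[Y]=[Y_0]$ in $A_*\mathbb{P}(\mathscr{E})$ and both classes push forward to $\frac{c(T\mathbb{P}(\mathscr{E}))}{c(\mathscr{V})}\cap[Y]$), and then pushes down to $B$ via functoriality of $c_{\text{SM}}$; the equality genuinely lives only after applying $c_{\text{SM}}$ and pushing forward.
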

Theorem 1.1 is essentially a relative version of Verdier's formula \ref{VF}, the proof of which we provide in \S\ref{CSMV}. When $Y$ is an elliptic fibration in Weierstra{\ss} form as defined earlier, computing both sides of equation \ref{mt} as given in Theorem~\ref{VT} yields precisely identity \ref{MI} (as we show in \S\ref{ls}). In what follows we review the theory of CSM classes of constructible functions along with the specialization morphisms first presented by Verdier, prove Theorem~\ref{VT}, discuss the physical motivation behind identities such as \ref{MI}, and then show by explicit computation how Theorem~\ref{VT} yields such Chern class identities.

\section{CSM classes, Verdier specialization, and proof of Theorem~\ref{VT}}\label{CSMV} Let $X$ be a complex variety. A constructible function on $X$ is an integer-valued function of the form
\[
\sum_i a_i\mathbbm{1}_{W_i},
\]
with each $a_i\in \mathbb{Z}$, $W_i\subset X$ a closed subvariety and $\mathbbm{1}_{W_i}$ the function that evaluates to $1$ for points inside of $W_i$ and is zero elsewhere. The collection of all such functions forms an abelian group under addition, and is referred to as the \emph{group of constructible functions} on $X$, denoted $F(X)$. A proper morphism $f:X\to Y$ induces a functorial group homomorphism $f_*:F(X)\to F(Y)$, which by linearity is determined by the prescription
\begin{equation}\label{cfd}
f_*\mathbbm{1}_W(p)=\chi\left(f^{-1}(p)\cap W\right),
\end{equation}
where $W\subset X$ is a closed subvariety and $\chi$ denotes topological Euler characteristic with compact support. By taking $F(f)=f_*$, we may view $F$ as a covariant functor from varieties to abelian groups. Another covariant functor from varieties to abelian groups is the homology functor $H_*$, which takes a variety to its integral homology. Motivated by a conjecture of Deligne and Grothendieck, in 1974 MacPherson explicitly constructed a natural transformation
\[
c_*:F\to H_*,
\]
such that for $X$ smooth
\[
c_*(\mathbbm{1}_X)=c(TM)\cap [X]\in H_*X, 
\] 
i.e., the total homological Chern class of $X$ \cite{RMCC}. The class $c_*(\mathbbm{1}_X)$ for arbitrary $X$ is then a functorial generalization of Chern class to the realm of singular varieties. Moreover, such a class provides a means of generalizing the Gau{\ss}-Bonnet theorem to the singular setting, as functoriality implies
\[
\int_X c_*(\mathbbm{1}_X)=\chi(X),
\]
where the integral sign denotes proper pushforward to a point. As the class $c_*(\mathbbm{1}_X)$ was later shown by Brasselet and Schwartz to coincide with the Alexander-dual of a class constructed by Schwartz in the 1960s, we now refer to it as the \emph{Chern-Schwartz-MacPherson \emph{(}or simply CSM\emph{)} class}. Moreover, for an arbitrary constructible function $\delta\in F(X)$ we will refer to $c_*(\delta)$ as the `CSM class of $\delta$', which will be denoted from here on by $c_{\text{SM}}(\delta)$.

Now suppose $\mathcal{Z}\to \mathscr{D}$ is a family over an open disk about the origin in $\mathbb{C}$ such that it is topologically locally trivial over $\mathscr{D} \setminus \{0\}$, denote a general fiber over $\mathscr{D}\setminus \{0\}$ by $Z_t$ and denote the central fiber by $Z_0$. In \cite{VS}, Verdier defines specialization morphisms
\[
\sigma_H:H_*(Z_t)\to H_*(Z_0), \quad \quad \sigma_F:F(\mathcal{Z})\to F(Z_0), 
\]
and proves that for any constructible function $\vartheta \in F(\mathcal{Z})$
\begin{equation}\label{VF}
\sigma_Hc_{\text{SM}}(\left.\vartheta\right|_{Z_t})=c_{\text{SM}}(\sigma_F(\vartheta))
\end{equation}
for $t$ sufficiently small.

The only instance of this formula which will concern us is in the context of Theorem~\ref{VT}, i.e., when $\mathcal{Z}=\mathscr{Y}$ and $\vartheta=\mathbbm{1}_{\mathscr{Y}}$ as given in the statement of Theorem~\ref{VT}. In such a case $\mathscr{Y}$ is the family corresponding to a smooth deformation of a fibration $Y_0$ whose total space is singular, so that $\left.\vartheta\right|_{Z_t}=\mathbbm{1}_Y$. Moreover, as observed for example in the proof of Theorem~5.3 in \cite{PPCC} we have 
\[
\sigma_Hc_{\text{SM}}(\mathbbm{1}_Y)=c_{\text{F}}(Y_0), 
\]
where $c_{\text{F}}(Y_0)$ denotes the \emph{Chern-Fulton class} of $Y_0$. We won't give the full definition of Chern-Fulton class, but for varieties admitting a virtual tangent bundle their Chern-Fulton class coincides with the total Chern class of its virtual tangent bundle capped with its fundamental class, so
\begin{equation}\label{FC}
c_{\text{F}}(Y_0)=c(TY_0^{\text{vir}})\cap [Y_0],
\end{equation}
where $TY_0^{\text{vir}}$ denotes the virtual tangent bundle of $Y_0$ (i.e., $T\mathbb{P}(\mathscr{E})-N_{Y_0}\mathbb{P}(\mathscr{E})$). Thus formula \ref{VF} in the context of Theorem~\ref{VT} yields
\begin{equation}\label{f1}
c_{\text{F}}(Y_0)=c_{\text{SM}}(\sigma_F(\mathbbm{1}_{\mathscr{Y}})).
\end{equation}

As formula \ref{f1} is sufficient for proving Theorem~\ref{VT}, we hold off giving a precise definition of $\sigma_F$ until \S\ref{ls}, where we actually need to explicitly compute $\sigma_F(\mathbbm{1}_{\mathscr{Y}})$. Moreover, since any reference to $\sigma_H$ will no longer be needed we will denote $\sigma_F$ simply by $\sigma$ from here on. We are now in position to prove Theorem~\ref{VT}, but before doing so, we recall our precise set-up.

Let $B$ be a smooth complete complex variety of arbitrary dimension endowed with a vector bundle $\mathscr{E}\to B$, let $\pi:\mathbb{P}(\mathscr{E})\to B$ denote the projective bundle of \emph{lines} in $\mathscr{E}$ and let $\varphi:Y\to B$ be a proper surjective morphism such that $Y$ is smooth and the generic fiber of $\varphi$ is an elliptic curve. We further assume that the total space $Y$ is the zero-scheme associated with a section of a vector bundle $\mathscr{V}\to \mathbb{P}(\mathscr{E})$, and that $\varphi=\pi\circ i_Y$, where $i_Y:Y\hookrightarrow \mathbb{P}(\mathscr{E})$ is the natural inclusion. Now suppose $\mathscr{D}$ is an open disk about the origin in $\mathbb{C}$ and let $s_t$ denote a smoothly varying family of sections of $\mathscr{V}\to \mathbb{P}(\mathscr{E})$ over $\mathscr{D}$. Then by considering the zero-schemes of the sections $s_t$ we obtain a family $\mathscr{Y}\to \mathscr{D}$ whose fiber $Y_t$ over $t\in \mathscr{D}$ comes naturally equipped with a morphism $\varphi_t:Y_t\to B$ (by composing the natural inclusion with the bundle projection $\pi$). Our last assumption is that there exists a $t_0\in D\setminus \{0\}$ such that $Y_{t_0}=Y$ and that the total space of the family $\mathscr{Y}\to D$ is smooth away from the central fiber $Y_0$ \footnote{Though Verdier makes the assumption of topological local triviality over $D\setminus 0$ in order to employ his specialization morphisms, we use a generalization of Verdier's specialization morphisms given by Aluffi\cite{VSA}, which only requires that the family to which you are applying the specialization morphisms to be smooth away from its central fiber, and this is the context in which we will work throughout.}. We recall that $\sigma \mathbbm{1}_{\mathscr{Y}}$ is a contructible function on $Y_0$, thus we may push it forward to $B$ via ${\varphi_0}_*$, and similarly we may pushforward $\mathbbm{1}_Y$ to $B$ via $\varphi_*$. The conclusion of Theorem~\ref{VT} then states
\[
c_{\text{SM}}\left(\varphi_*\mathbbm{1}_{Y}\right)=c_{\text{SM}}\left({\varphi_0}_*\sigma \mathbbm{1}_{\mathscr{Y}}\right),
\]  
which we now prove.

\begin{proof}[Proof of Theorem~\ref{VT}]
Let $i_{Y_0}:Y_0\hookrightarrow \mathbb{P}(\mathscr{E})$ denote the natural inclusion, so that $\varphi_0=\pi\circ i_{Y_0}$. Then
\begin{eqnarray*}
{i_{Y_0}}_*c_{\text{SM}}\left(\sigma \mathbbm{1}_{\mathscr{Y}}\right)&\overset{\ref{f1}}=&{i_{Y_0}}_*c_{\text{F}}(Y_0) \\
&\overset{\ref{FC}}=&{i_{Y_0}}_*\left(\frac{c\left(i_{Y_0}^*T\mathbb{P}(\mathscr{E})\right)}{c\left(i_{Y_0}^*\mathscr{V}\right)}\cap [Y_0] \right)\\
&=&\frac{c\left(T\mathbb{P}(\mathscr{E})\right)}{c\left(\mathscr{V}\right)}\cap {i_{Y_0}}_*[Y_0] \\
&=&\frac{c\left(T\mathbb{P}(\mathscr{E})\right)}{c\left(\mathscr{V}\right)}\cap {i_Y}_*[Y] \\
&=&{i_Y}_*\left(\frac{c\left(i_{Y}^*T\mathbb{P}(\mathscr{E})\right)}{c\left(i_{Y}^*\mathscr{V}\right)}\cap [Y] \right) \\
&=&{i_Y}_*c(Y),
\end{eqnarray*}
where have repeatedly used adjunction, the projection formula, and the fact that $Y$ and $Y_0$ are both zero-schemes of sections of $\mathscr{V}\to \mathbb{P}(\mathscr{E})$ (e.g. their normal bundles are both restrictions of $\mathscr{V}$). Thus
\begin{eqnarray*}
c_{\text{SM}}\left(\varphi_*\mathbbm{1}_Y\right)&=&\varphi_*c_{\text{SM}}\left(\mathbbm{1}_Y\right) \\
&=&\varphi_*c(Y) \\
&=&\pi_*\circ {i_Y}_* c(Y) \\
&=&\pi_*\circ {i_{Y_0}}_*c_{\text{SM}}\left(\sigma \mathbbm{1}_{\mathscr{Y}}\right) \\
&=&{\varphi_0}_*c_{\text{SM}}\left(\sigma \mathbbm{1}_{\mathscr{Y}}\right) \\
&=&c_{\text{SM}}\left({\varphi_0}_*\sigma \mathbbm{1}_{\mathscr{Y}}\right), \\
\end{eqnarray*} 
where we just repeatedly use functoriality of $c_{\text{SM}}$ and the fourth equality follows from our first string of equalities, concluding the proof.
\end{proof}

As mentioned in \S\ref{intro}, Theorem~\ref{VT} is essentially just a relative version of formula \ref{VF} in the category of $B$-schemes. Moreover, the assumption that $Y$ was in fact an elliptic fibration was unnecessary, but we didn't want to stray too far out of context. We want to reiterate that the main contribution of this note however is not Theorem~\ref{VT}, but the realization that Chern class identities appearing in the physics literature on weak coupling limits of F-theory are merely different instances of this simple relation.

\begin{remark}
Mac Pherson's natural tansformation $c_*$ has since been extended by Kennedy to the case where $\mathbb{C}$ is replaced by an arbitrary algebraically closed field of characteristic zero and $H_*$ is replaced by the Chow functor $A_*$ \cite{KCC}. The Chow functor $A_*$ is covariant with respect to proper morphisms, takes a variety to its group of algebraic cycles modulo rational equivalence, and acts on proper morphisms via proper pushforward of algebraic cycles (see \cite{IntersectionTheory}, \S~1.4.). In light of this, we may work in the context of Chow homology as opposed to integral homology, and this is our preference moving forward.  
\end{remark} 

\section{Tadpole relations between F-theory and type IIB}\label{tr}
The geometric apparatus of a regime of string theory referred to as `F-theory' is an elliptic fibration $Y\to B$ (usually with a section) with $Y$ a smooth Calabi-Yau fourfold (over $\mathbb{C}$) playing the r\^{o}le of the compactified dimensions of spacetime. The theory may be smoothly deformed to weakly coupled type IIB orientifold theories via certain deformations (parametrized by a disk in $\mathbb{C}$) of the fibration to singular fibrations of the form $Y_0\to B$ where all the fibers are singular degenerations of elliptic curves. Such deformations are referred to as `weak coupling limits' (or `Sen limits'), as they correspond to diminishing the string coupling constant $g_s$ to a value sufficiently close to zero (as needed for perturbative expansions). The spacetime of the type IIB theory however is not $Y_0$, but a certain Calabi-Yau double cover $X\to B$ (ramified over a smooth divisor) naturally associated with $Y_0$. In the physics literature the Calabi-Yau variety $X$ is referred to as an \emph{orientifold} and the ramification divisor is referred to as the \emph{orientifold \emph{(}or O7-\emph{)} plane}. Such a passage from a strongly coupled theory to a physically equivalent weakly coupled theory is a manifestation of what physicists call `S-duality'.  As D3-branes are S-duality invariant, they do not depend on the string coupling constant, thus if F-theory and its weakly coupled counterpart are indeed equivalent descriptions of nature then the induced D3 charge in both theories should be equal. An equation relating the charges in both theories is what we refer to as a `tadpole relation'.

In F-theory the induced D3 charge is proportional to the topological Euler characteristic of the total space $Y$ of the elliptic fibration, whereas in type IIB the induced D3 charge (in the absence of fluxes) come from divisors in the type IIB spacetime $X$ which support the orientifold plane and D7-branes (which in the case of a weak coupling limit arise as pullbacks to $X$ of components of the flat limit of the corresponding family of discriminants). The charges of the orientifold plane and D7-branes were initially thought also to be proportional to the Euler characteristic of the varieties on which they are supported, but in certain weak coupling limits of F-theory a single D7-brane will wrap a singular variety whose local equations coincide with that of a Whitney umbrella, and it was shown in \cite{AE1}\cite{AE2}\cite{CDM} that a correction factor must be added in order to account for the singularities to obtain the proper charge. For example, for the case of an elliptic fibration $\varphi:Y\to B$ in Weierstra{\ss} form as introduced in \S\ref{intro}, the tadpole relation shown to hold in \cite{AE1} is
\begin{equation}\label{tr1}
\chi(Y)=2\chi(O)+\chi(D)-\chi(S),
\end{equation}
where the LHS of \ref{tr1} represents the D3 charge in F-theory and the right hand side represents the D3 charge in its type IIB limit. The orientifold plane is denoted here by $O$, the D7 brane (which has Whitney umbrella singularities in this case) is denoted by $D$ and $S$ denotes the pinch locus of $D$, so that the correction factor in this case is $-\chi(S)$. Moreover, it was shown in \cite{AE1} that equation \ref{tr1} holds at the level of Chern classes after pushing forward to $B$ (and without any Calabi-Yau or dimensional hypothesis), yielding the Chern class identity 
\[
c_{\text{SM}}\left(\mathbbm{1}_{\Delta}+\mathbbm{1}_C\right)=c_{\text{SM}}\left(2\mathbbm{1}_O+\mathbbm{1}_D-\mathbbm{1}_S\right),
\]
i.e., equation \ref{MI}. Similar Chern class identities were then shown to hold in an analogous manner in the works \cite{AE2}\cite{CCG}\cite{EFY}\cite{EJY}, i.e., by starting with the tadpole relations predicted by physics and then showing that the relations hold at the level of Chern classes. What we give here is a top-down approach to the existence of such identities, as we show that such Chern class identities arising from weak coupling limits of F-theory are particular instances of Theorem~\ref{VT}, i.e., the relation
\begin{equation}\label{mf1}
c_{\text{SM}}\left(\varphi_*\mathbbm{1}_{Y}\right)=c_{\text{SM}}\left({\varphi_0}_*\sigma \mathbbm{1}_{\mathscr{Y}}\right),
\end{equation}
where the specialization $\sigma:F(Y)\to F(Y_0)$ is from constructible functions on the F-theory elliptic fibration $Y$ to constructible functions on its singular degeneration $Y_0$ corresponding to a weak coupling limit. Furthermore, when tadpole relations involve singular branes the appropriate correction factor accounting for the singularities automatically appears on the RHS of equation \ref{mf1}. As an illustration, in \S\ref{ls} we show this is the case by explicitly computing both sides of equation \ref{mf1} for the case of a Weiertra{\ss} fibration and its weak coupling limit first constructed by Sen \cite{Sen}. In \S\ref{AA}, we give the precise definitions of all non-Weiertra{\ss} fibrations appearing in the works \cite{AE2}\cite{CCG}\cite{EFY}\cite{EJY}, along with their weak coupling limits and associated Chern class identities corresponding to tadpole relations. We omit the derivation of the associated Chern class identities via Theorem~\ref{VT} for these non-Weierstra{\ss} fibrations, as the computations follow as in the Weierstra{\ss} case \emph{mutatis mutandis}.

\section{Verification of the Weierstra{\ss} case}\label{ls} Let $B$ be a smooth complete complex variety endowed with an ample line bundle $\mathscr{L}\to B$. As defined in \S\ref{intro}, the Weierstra{\ss} fibration (also often referred to as an `$E_8$' fibration in the physics literature) is given by the equation  
\[
Y:\left(y^2=x^3+fxz^2+gz^3\right)\subset \mathbb{P}(\mathscr{E}),
\]
where\footnote{Here and in the cases of all other fibrations presented in \S\ref{AA} the total space of the elliptic fibration will be Calabi-Yau if and only if $\mathscr{L}=\mathscr{O}(-K_B)$. Thus the ampleness assumption on $\mathscr{L}$ then requires $B$ to be Fano in the Calabi-Yau case.} $\mathscr{E}=\mathscr{O}_B\oplus \mathscr{L}^2\oplus \mathscr{L}^3$. The coefficients $f$ and $g$ are then general sections of $\mathscr{L}^4$ and $\mathscr{L}^6$ respectively and $x$, $y$ and $z$ are chosen to be sections of appropriate line bundles so that $Y$ ends up being a smooth zero-scheme associated with a section of the line bundle $\mathscr{O}_{\mathbb{P}(\mathscr{E})}(3)\otimes \mathscr{L}^6$. We recall that the discriminant of $Y$ is given by 
\[
\Delta:\left(4f^3+27g^2=0\right)\subset B.
\]
In the case that $df$ and $dg$ are everywhere linearly independent $\Delta$ is singular along
\[
C:\left(f=g=0\right)\subset B.
\]
The singular degeneration of $Y$ corresponding to the weak coupling limit first constructed by Sen \cite{Sen} corresponds to setting
\[
f=-3h^2+c\eta, \quad g=-2h^3+ch\eta+c^2\chi,
\]
where $h$, $\eta$ and $\chi$ are appropriate sections of tensor powers of $\mathscr{L}$ and $c$ is our deformation parameter. Varying $c$ over a disk $\mathscr{D}$ about the origin in $\mathbb{C}$ then gives rise to a family $\mathscr{Y}\to \mathscr{D}$, whose total space is smooth smooth away from its central fiber, which is given by
\[
Y_0:\left(y^2=x^3-3h^2xz^2-2h^3z^3\right)\subset \mathbb{P}(\mathscr{E}).
\] 
The fibers of $Y_0$ over the complement of $\{h=0\}$ are all nodal cubics while the fibers over $\{h=0\}$ degenerate further to cuspidal cubics. The flat limit as $c\to 0$ of the corresponding family of discriminants is given by 
\[
\Delta_0:\left(h^2(\eta^2+12h\chi)=0\right)\subset B.
\]
The varieties then contributing to the D3 charge on the type IIB side are then the scheme-theoretic pullbacks of the irreducible components of $\Delta_0$ to 
\begin{equation}\label{of}
X:\left(z_o^2=h\right)\subset \mathscr{L},
\end{equation}
which is a double cover of $B$ ramified over $\{h=0\}$. It is straightforward to show that $Y$ is Calabi-Yau if and only if $X$ is, and in this case $X$ then plays the r\^{o}le of the compactified dimensions of spacetime in an `orientifolded' type IIB theory. In any case, by projecting to $B$ we may compute the $D3$ charges on the type IIB side in terms of the varieties
\[
O:(h=0)\subset B, \quad D:(\eta^2+12h\chi=0)\subset B, \quad S:(h=\eta=\chi=0)\subset B,
\]
i.e., the irreducible components of the limiting discriminant $\Delta_0$ and the singular locus of $D$ which we denote by $S$ (which  according to \cite{CDM} (\S3.4.4) must be incorporated into the charge in order to account for the singularities of $D$). We assume here that $h$ is general so that $O$ is smooth. As mentioned in the previous section, the presumed tadpole relation predicted by F-theory/type IIB duality is then
\[
\chi(Y)=2\chi(O)+\chi(D)-\chi(S),
\]
which may be shown to hold by integrating both sides of the following Chern class identity
\begin{equation}\label{f2}
c_{\text{SM}}(\mathbbm{1}_{\Delta}+\mathbbm{1}_{C})=c_{\text{SM}}\left(2\mathbbm{1}_O+\mathbbm{1}_D-\mathbbm{1}_S\right),
\end{equation}
which was verified to hold by explicit computation of both sides in \cite{AE1}. We now show that identity \ref{f2} which was first arrived at by physical considerations is an explicit representation of the equation
\begin{equation}\label{f3}
c_{\text{SM}}\left(\varphi_*\mathbbm{1}_{Y}\right)=c_{\text{SM}}\left({\varphi_0}_*\sigma \mathbbm{1}_{\mathscr{Y}}\right),
\end{equation}
as given by Theorem~\ref{VT}. 

\begin{remark}A key insight of Clingher, Donagi and Wijnholt in \cite{SL} was that the varieties $O$, $D$ and the seemingly auxiliary type IIB orientifold $X$ all naturally arise after blowing up the singular locus of the total space of the family $\mathscr{Y}$. Thus one does not have to compute limiting discriminants and then pullback to $X$ to arrive at the corresponding D-brane configuration on the type IIB side. Moreover, it is precisely this blowup that will be needed in order to compute $\sigma \mathbbm{1}_{\mathscr{Y}}$ appearing in the RHS of equation \ref{f3} \footnote{A calculation of the degree-zero component RHS of \ref{f3} in the case where $Y$ is a Calabi-Yau fourfold in Weierstra{\ss} form was given in \cite{SL}.}. (we will give the definition of $\sigma$ following Lemma~\ref{l1}). Upon doing so we will point out how $O$, $D$ and $X$ naturally arise via the blowup as first observed in \cite{SL}. 
\end{remark}

Now both sides of equation \ref{f3} involve pushing forward constructible functions, for which we will employ the following 
\begin{lemma}\label{l1}
Let $f:Z\to V$ be a proper morphism of varieties and let $\{U_i\}$ be a stratification of $V$ with $U_i$ locally closed such that the fibers are topologically constant on each $U_i$. Denote by $F_i$ the fiber over $U_i$ and write $U_i=V_i\setminus W_i$ with $V_i$ and $W_i$ closed subvarieties of $V$ for each $i$. Then
\[
f_*\mathbbm{1}_{Z}=\sum_i \chi(F_i)\left(\mathbbm{1}_{V_i}-\mathbbm{1}_{W_i}\right).
\] 
\end{lemma}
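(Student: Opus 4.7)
The plan is to reduce the lemma to a pointwise verification, since both sides are $\mathbb{Z}$-valued constructible functions on $V$ and equality of constructible functions can be tested at each point. First I would invoke the defining formula for the proper pushforward of a characteristic function, namely
\[
(f_*\mathbbm{1}_Z)(p) \;=\; \chi\bigl(f^{-1}(p)\bigr)
\]
for every $p\in V$, where $\chi$ denotes compactly supported topological Euler characteristic.

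Second, since $\{U_i\}$ stratifies $V$, every $p\in V$ lies in a unique stratum $U_i$, and the topological local constancy hypothesis forces $f^{-1}(p)$ to be homeomorphic to $F_i$. As compactly supported Euler characteristic is a topological invariant, this gives $(f_*\mathbbm{1}_Z)(p) = \chi(F_i)$ for every $p \in U_i$, which packages across strata into the intermediate identity
\[
f_*\mathbbm{1}_Z \;=\; \sum_i \chi(F_i)\,\mathbbm{1}_{U_i}
\]
of constructible functions on $V$.

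The final step is to translate $\mathbbm{1}_{U_i}$ into the closed-subvariety language of $V_i$ and $W_i$. Since $U_i = V_i \setminus W_i$, I would without loss of generality assume $W_i \subseteq V_i$ (replacing $W_i$ by $V_i \cap W_i$ if necessary, which is still a closed subvariety of $V$ and leaves $V_i \setminus W_i$ unchanged); under this normalization $\mathbbm{1}_{U_i} = \mathbbm{1}_{V_i} - \mathbbm{1}_{W_i}$ as integer-valued functions on $V$. Substituting into the previous display yields the stated identity. There is no genuine obstacle of principle here: the whole argument is essentially a single pointwise evaluation, and the only bookkeeping point is to make the subtraction of characteristic functions consistent via the harmless reduction to $W_i \subseteq V_i$.
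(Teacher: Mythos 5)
Your proof is correct and is exactly the argument the paper has in mind: the paper omits the proof precisely because it ``follows directly from the definition of $f_*$'' in equation \ref{cfd}, which is the pointwise evaluation $\left(f_*\mathbbm{1}_Z\right)(p)=\chi\left(f^{-1}(p)\right)=\chi(F_i)$ for $p\in U_i$ that you carry out. Your remark normalizing $W_i\subseteq V_i$ is a sensible reading of the statement's implicit convention (and is satisfied in all of the paper's applications), so nothing further is needed.
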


We omit the proof of Lemma~\ref{l1} as it follows directly from the definition of $f_*$ as given by equation \ref{cfd}. The last ingredient we need in order to compute the RHS of equation \ref{f3} is the definition of Verdier's sepcialization morphism from constructible functions on $Y$ to constructible functions on $Y_0$ which we denote here by $\sigma$, so that we may compute $\sigma \mathbbm{1}_{\mathscr{Y}}$. While Verdier's original definition of $\sigma$ for general families was given in the analytic category, we give an equivalent formulation $\sigma \mathbbm{1}_{\mathscr{Y}}$ first given by Aluffi \cite{VSA}, which holds over any algebraically closed field of characteristic zero (and applies to more general families than the case considered by Verdier, though we give the definition in terms of the context at hand).
\begin{definition}\label{SD} 
Let $\mathcal{Z}\to \mathscr{D}$ be a family over a disk about the origin in $\mathbb{C}$ such that the total space $\mathcal{Z}$ is smooth over $\mathscr{D}\setminus \{0\}$, denote its central fiber by $Z_0$, and let $\psi:\widetilde{\mathcal{Z}}\to \mathcal{Z}$ be a proper birational morphism such that $\widetilde{\mathcal{Z}}$ is smooth, $\mathcal{D}=\psi^{-1}(Z_0)$ is a divisor with normal crossings with smooth components, and $\psi$ restricted to the complement of $\mathcal{D}$ is an isomorphism (such a $\psi$ exists by resolution of singularities). Let $\delta$ be the constructible function on $\mathcal{D}$ given by 
\[
\delta(p)=\begin{cases} m \quad  \text{if $p$ lies on a single component of $\mathcal{D}$ of multiplicity $m$,} \\ 0 \quad \hspace{2mm} \text{otherwise}. \end{cases}
\]   
We then set
\[
\sigma \mathbbm{1}_{\mathcal{Z}}={\left.\psi\right|_{\mathcal{D}}}_*\delta,
\]
where ${\left.\psi\right|_{\mathcal{D}}}_*$ denotes the proper pushforward associated with the restriction of $\psi$ to $\mathcal{D}$.
\end{definition} 

It was proven in \cite{VSA} that Definition~\ref{SD} is independent of $\psi$, so that $\sigma \mathbbm{1}_{\mathcal{Z}}$ is indeed well-defined. We now proceed to compute both sides of equation \ref{f3} as given by Theorem~\ref{VT}. We first compute $\varphi_*\mathbbm{1}_{Y}$, which will yield the LHS of identity \ref{f2}. The fibers of $\varphi$ over $B\setminus \Delta$ are all elliptic curves which have vanishing Euler characteristic, the fibers over $\Delta\setminus C$ are nodal cubics which have Euler characteristic equal to 1 and the fibers over $C$ are cuspidal cubics which have Euler characteristic equal to 2. So by Lemma~\ref{l1} we have
\begin{eqnarray*}
\varphi_*\mathbbm{1}_{Y}&=&0\cdot \left(\mathbbm{1}_B-\mathbbm{1}_{\Delta}\right)+1\cdot \left(\mathbbm{1}_{\Delta}-\mathbbm{1}_C\right)+ 2\cdot \mathbbm{1}_C \\
                        &=&\mathbbm{1}_{\Delta}+\mathbbm{1}_C.
\end{eqnarray*}
Theorem~\ref{VT} then yields
\begin{equation}\label{fe}
c_{\text{SM}}\left(\mathbbm{1}_{\Delta}+\mathbbm{1}_C\right)=c_{\text{SM}}\left({\varphi_0}_*\sigma \mathbbm{1}_{\mathscr{Y}}\right),
\end{equation}
thus we now proceed to compute ${\varphi_0}_*\sigma \mathbbm{1}_{\mathscr{Y}}$, which is the more involved part of the calculation. For the computation of $\sigma \mathbbm{1}_{\mathscr{Y}}$, we blowup $\mathbb{P}(\mathscr{E})\times \mathscr{D}$ along the singular locus of $\mathscr{Y}$
\[
\mathscr{Y}_{\text{sing}}:(y=x+h=c=0)\subset \mathbb{P}(\mathscr{E})\times \mathscr{D}.
\]
We set $w=x+h$ for ease of notation, and will work in local coordinates where $z=1$ (as the singularities of $\mathscr{Y}$ are away from $\{z=0\}$) so that we take $(y,w,c;b_1,\ldots, b_r)$ as local coordinates on $\mathbb{P}(\mathscr{E})\times \mathscr{D}$ (the $b_i$ are local coordinates on the base $B$). Our family $\mathscr{Y}$ then has local equation
\[
\mathscr{Y}:\left(y^2=(w-h)^3+(c\eta -3h^2)(w-h)+ch\eta+c^2\chi-2h^3\right)\subset \mathbb{A}^2\times B\times \mathscr{D}.
\]
The blowup map in a representative chart is then given by
\[
(X_1,X_2,X_3;b_1,\ldots, b_r)\mapsto (X_1X_2,X_2,X_2X_3;b_1,\ldots,b_r),
\]
so that the exceptional divisor is given by $X_2^2=0$ and the proper transform of $\mathscr{Y}$ is given by 
\[
\mathscr{Y}_{\text{prop}}:X_1^2-X_2+3h-X_3\eta-X_3^2\chi=0,
\]
which is easily seen to be smooth. As $c=0$ pulls back to $X_2X_3=0$ under the blowup map, the pullback of the central fiber $Y_0$ is obtained by intersecting $\mathscr{Y}_{\text{prop}}$ with $X_2X_3=0$. As such, we see that the pullback of the central fiber $Y_0$ is a divisor with two components of multiplicity 1, one corresponding to intersecting $\mathscr{Y}_{\text{prop}}$ with $X_2=0$ (which yields the proper transform of the central fiber $Y_0$), and one corresponding to intersecting $\mathscr{Y}_{\text{prop}}$ with $X_3=0$. Moreover, the components of this divisor are smooth and intersect transversally (which follows from our smoothness assumption on $O:(h=0)$), and as such is a normal crossing divisor, whose components are explicitly given by the equations
\[
\mathcal{D}_1:X_1^2-3h-X_3\eta-X_3^2\chi=0, \quad \mathcal{D}_2:X_1^2-X_2+3h=0.
\]
The analysis in the other charts of the blowup is similar, and it is then straightforward to verify that our blowup (restricted to $\mathscr{Y}_{\text{prop}}$) satisfies the conditions as given in Definition~\ref{SD}. The function $\delta$ which we must pushforward via the blowup to yield $\sigma \mathbbm{1}_{\mathscr{Y}}$ is then
\[
\delta=\mathbbm{1}_{\mathcal{D}_1}+\mathbbm{1}_{\mathcal{D}_2}-2\mathbbm{1}_{X},
\] 
where we use $X$ to denote $\mathcal{D}_1\cap \mathcal{D}_2$, as it has equation $X_1^2+3h=0$ in $\mathcal{D}_2$ and thus may be naturally identified with the orientifold given in \ref{of} (as pointed out in \cite{SL}). Now we pushforward $\delta$ via the pushforward associated with the blowup composed with ${\varphi_0}_*$ to yield ${\varphi_0}_*\sigma \mathbbm{1}_{\mathscr{Y}}$. For this we view $\mathcal{D}_1$, $\mathcal{D}_2$ and $X$ as fibrations over $B$ and apply Lemma~\ref{l1}. Now $\mathcal{D}_2$ is a $\mathbb{P}^1$-bundle over $B$ and $X$ is a double cover of $B$ ramified over $O:(h=0)$, thus by Lemma~\ref{l1} we have
\[
\mathbbm{1}_{\mathcal{D}_2}\mapsto 2\mathbbm{1}_B, \quad \mathbbm{1}_{X}\mapsto 2\mathbbm{1}_B-\mathbbm{1}_O.
\]
As for $\mathcal{D}_1$, the fibers over $B\setminus D$ are all smooth conics, which degenerate to line pairs intersecting at a single point over $D\setminus S$, and then degenerate further to double lines over $S$. By inclusion-exclusion the Euler characteristic of a line pair intersecting at a single point is 3, thus by Lemma~\ref{l1} we have 
\[
\mathbbm{1}_{\mathcal{D}_1}\mapsto 2\cdot \left(\mathbbm{1}_B-\mathbbm{1}_{D}\right)+3\cdot \left(\mathbbm{1}_D-\mathbbm{1}_{S}\right)+2\cdot \mathbbm{1}_S=2\mathbbm{1}_B+\mathbbm{1}_D-\mathbbm{1}_S.
\] 
We then have
\[
{\varphi_0}_*\sigma \mathbbm{1}_{\mathscr{Y}}=2\mathbbm{1}_O+\mathbbm{1}_D-\mathbbm{1}_S,
\]
which after substituting into the RHS of equation \ref{fe} yields
\[
c_{\text{SM}}(\mathbbm{1}_{\Delta}+\mathbbm{1}_{C})=c_{\text{SM}}\left(2\mathbbm{1}_O+\mathbbm{1}_D-\mathbbm{1}_S\right),
\]
as desired.

\section{Explicit equations for the non-Weiertra{\ss} fibrations, their weak coupling limits and corresponding Chern class identities.}\label{AA}
Let $B$ be a smooth complete complex variety endowed with an ample line bundle $\mathscr{L}\to B$. The non-Weierstra{\ss} fibrations appearing in \cite{AE2}\cite{CCG}\cite{EFY}\cite{EJY} consist of four classes of elliptic fibrations referred to in the physics literature as $E_6$, $E_7$, $D_5$ and $Q_7(\mathscr{L},\mathscr{S})$ fibrations, which we will denote simply by $Q_7$. The explicit equations are given in Table~1.
\begin{table}[hbt]\label{t1}
\begin{center}
\begin{tabular}{|c|c|}
\hline
        \text{Fibration}  & \text{Defining equations}   \\
\hline
$Y_{E_6}$ & $x^3+y^3=a_1xyz+a_2xz^2+a_3yz^2+a_4z^3$ \\
\hline
$Y_{E_7}$ & $y^2=x^4+b_1x^2z^2+b_2xz^3+b_3z^4$ \\
\hline
$Y_{D_5}$ & $x^2-y^2-z(d_1z+d_2w)=w^2-x^2-z(d_3z+d_4x+d_5y)=0$ \\
\hline
$Y_{Q_7}$ & $y(x^2-e_1y^2)+z(e_2y^2+e_3xz+e_4yz+e_5z^2)=0$ \\
\hline
\end{tabular}
\end{center}
\caption{Defining equations for non-Weierstra{\ss} fibrations.}
\end{table}

The $E_6$ and $Q_7$ fibrations are both hypersurfaces in a $\mathbb{P}^2$-bundle over $B$, the $E_7$ fibration is a hypersurface in a $\mathbb{P}(1,1,2)$-bundle and the $D_5$ fibration is a complete intersection in a $\mathbb{P}^3$-bundle. The coefficients $a_i$, $b_j$ and $c_k$ are all general sections of appropriate tensor powers of $\mathscr{L}$ so that each of the defining equations for the fibration determines a well defined section of a line bundle on the ambient projective bundle in which they reside. The coefficients $e_l$ are all general sections of line bundles of the form $\mathscr{L}^m\otimes \mathscr{S}^n$ where $\mathscr{S}\to B$ is another line bundle on $B$. In each of the $E_6$, $D_5$ and $Q_7$ cases, a single weak coupling limit was constructed for each fibration in \cite{AE2}\cite{EFY}\cite{EJY} respectively by setting
\[
\begin{cases}
E_6:a_1=6\nu,\hspace{3mm}a_2=9\nu^2+3h,\hspace{3mm}a_3=9\nu^2+3h+c\phi,\hspace{3mm}a_4=2\nu(5\nu^2+3h)+c(\chi+\nu \phi) \\
D_5:d_1=c\alpha,\hspace{3mm}d_2=c\eta,\hspace{3mm}d_3=h,\hspace{3mm}d_4=\psi_1+\psi_2,\hspace{3mm}d_5=\psi_1-\psi_2 \\
Q_7:e_1=\beta,\hspace{3mm}e_2=2\vartheta,\hspace{3mm}e_3=c^2\rho,\hspace{3mm}e_4=h,\hspace{3mm}e_5=c\iota, \\
\end{cases}
\]  
where the Greek letters all denote general sections of line bundles of the form $\mathscr{L}^p$ (or $\mathscr{L}^m\otimes \mathscr{S}^n$ in the $Q_7$ case) and $c$ is a deformation parameter which in each case gives rise to a family over a disk $\mathscr{Y}\to \mathscr{D}$. For the $E_7$ case two weak coupling limits were constructed in \cite{AE2} corresponding to setting
\[
b_1=-2h, \quad b_2=c\delta, \quad b_3=h^2+c\gamma
\]
for the first limit, and setting
\[
b_1=h-6\epsilon^2, \quad b_2=2\epsilon(h-4\epsilon^2)+2c\zeta, \quad b_3=\epsilon^2(h-3\epsilon^2)+2c\epsilon \zeta+c^2\tau
\]
for the second. The corresponding families are given in Table~2.

\begin{table}[hbt]\label{t2}
\begin{center}
\begin{tabular}{|c|c|}
\hline
        \text{Family}  & \text{Defining equations}   \\
\hline
$\mathscr{Y}_{E_6}$ & $x^3+y^3=6\nu xy+(9\nu^2+3h)x+(9\nu^2+3h+c\phi)y+10\nu^6\nu h+c(\chi+\nu \phi)$ \\
\hline
$\mathscr{Y}_{E_7}$ & $y^2=(x^2-h)^2+c(\delta x+\gamma)$ \\
\hline
$\mathscr{Y}'_{E_7}$ & $y^2=x^4+(h-6\epsilon^2)x^2+2\epsilon(h-4\epsilon^2)x+\epsilon^2(h-3\epsilon^2)+c(2\epsilon \zeta +c\tau)$ \\
\hline
$\mathscr{Y}_{D_5}$ & $x^2-y^2-c(\alpha-\eta w)=w^2-x^2-(h+(\psi_1+\psi_2)x+(\psi_1-\psi_2)y)=0$ \\
\hline
$\mathscr{Y}_{Q_7}$ & $y(x^2-\beta y^2+2\vartheta y+h)+c(\iota+c\rho x)=0$ \\
\hline
\end{tabular}
\end{center}
\caption{The families corresponding to weak coupling limits of non-Weierstra{\ss} fibrations.}
\end{table}

The singular locus of each family is either in codimension three or codimension four in the ambient space of the family $\mathbb{A}^2\times B\times \mathscr{D}$ ($\mathbb{A}^3\times B\times \mathscr{D}$ for the $D_5$ case). In each case a resolution of singularities of the total space of the family may be achieved by a single blowup, the centers of which we list in Table~3 (along with the singular locus of each family).

\begin{table}[hbt]\label{t3}
\begin{center}
\begin{tabular}{|c|c|c|}
\hline
        \text{Family} &\text{Singular locus}  & \text{Center of blowup}   \\
\hline
$\mathscr{Y}_{E_6}$ & $x+y+2\nu=h-(y+\nu)^2=\chi+\phi(y+\nu)=c=0$ & $x+y+2\nu=c=0$ \\
\hline
$\mathscr{Y}_{E_7}$ & $y=x^2-h=\delta x+\gamma=c=0$ & $y-x^2+h=c=0$ \\
\hline
$\mathscr{Y}'_{E_7}$ & $y=x+\epsilon=c=0$ & $y=x+\epsilon=c=0$ \\
\hline
$\mathscr{Y}_{D_5}$ & $x=y=\alpha-\eta w=c$ & $x-y=c=0$ \\
\hline
$\mathscr{Y}_{Q_7}$ & $y=x^2+h=\iota=c=0$ & $y=c=0$ \\
\hline
\end{tabular}
\end{center}
\caption{Equations in $\mathbb{A}^2\times B\times \mathscr{D}$ ($\mathbb{A}^3\times B\times \mathscr{D}$ for the $D_5$ case) for the singular loci of the families and for the centers of the blowups needed for a resolution of singularities for the corresponding family.}
\end{table}

In each case, the pullback of the central fiber of the original family pulls back to a normal crossing divisor, whose smooth components are all conic fibrations over the base. The discriminant of each of these conic fibrations coincide with the components of the limiting discriminant of the corresponding family other than the orientifold plane $O:(h=0)$, which are given in Table~4 (note that $D_2$, $D_3$, $D_6$, $D_9$ and $D_{11}$ are all singular).

\begin{table}[hbt]\label{t4}
\begin{center}
\begin{tabular}{|c|c|}
\hline
        \text{Limiting discriminant}  & \text{Defining equation}   \\
\hline
$\Delta_{E_6}=O\cup D_1 \cup D_2$ & $h^2(h+3\nu^2)(\chi^2-h\phi^2)=0$ \\
\hline
$\Delta_{E_7}=O\cup D_3$ & $h^2(\gamma^2-\delta^2h)=0$ \\
\hline
$\Delta'_{E_7}=O\cup D_4 \cup D_6$ & $h^2(h+4\epsilon^2)(\zeta^2-h\tau)=0$ \\
\hline
$\Delta_{D_5}=O\cup D_7 \cup D_8 \cup D_9$ & $h^2(h-\psi_1^2)(h-\psi_2^2)(h\eta^2-\alpha^2)=0$ \\
\hline
$\Delta_{Q_7}=O\cup D_{10} \cup D_{11}$ & $h^2\iota^2(\vartheta^2+h\beta)=0$ \\
\hline
\end{tabular}
\end{center}
\caption{The limiting discriminants for each family.}
\end{table}

The presumed tadpole relations predicted by physical arguments are then obtained by setting the Euler characteristic of the elliptic fibration to twice the Euler characteristic of the orientifold plane $O:(h=0)$ plus the Euler characteristics of the other components of the limiting discriminant (modulo a correction factor accounting for possible singularities of the components). For example, in the $E_6$ case the limiting discriminant is given by
\[
\Delta_{E_6}:(h^2(h+3\nu^2)(\chi^2-h\phi^2)=0)\subset B 
\]  
Thus the tadpole relation is given by 
\begin{equation}\label{E6TP2}
\chi(Y_{E_6})=2\chi(O)+\chi(D_1)+\chi(D_2)-\chi(S_2)
\end{equation}
where $D_1$, $D_2$ and $S_2$ are given by
\[
D_1:(h+3\nu^2=0), \quad D_2:(\chi^2-h\phi^2=0), \quad S_2:(\chi=h=\phi=0),
\]
so that $S_2$ is the pinch locus of $D_2$.

As in the Weierstra{\ss} case, the tadpole relation (\ref{E6TP2}) predicted  by physics is in fact the degree zero component of equation (\ref{mt}) of Theorem~\ref{VT} for the case $\mathscr{Y}=\mathscr{Y}_{E_6}$, namely
\begin{equation}\label{E6TP}
c_{\text{SM}}\left(\varphi_*\mathbbm{1}_{Y_{E_6}}\right)=c_{\text{SM}}\left({\varphi_0}_*\sigma \mathbbm{1}_{\mathscr{Y}_{E_6}}\right).
\end{equation}
In this case we have  
\[
{\varphi_0}_*\sigma \mathbbm{1}_{\mathscr{Y}_{E_6}}=2\mathbbm{1}_O+\mathbbm{1}_{D_1}+\mathbbm{1}_{D_2}-\mathbbm{1}_{S_2},
\]
so that the Chern class identity corresponding to the $E_6$ tadpole relation is 
\[
c_{\text{SM}}\left(\varphi_*\mathbbm{1}_{Y_{E_6}}\right)=c_{\text{SM}}\left(2\mathbbm{1}_O+\mathbbm{1}_{D_1}+\mathbbm{1}_{D_2}-\mathbbm{1}_{S_2}\right).
\]

\begin{remark}
The physicist's tadpole relation in the $E_6$ case looks a bit different from our equation (\ref{E6TP2}), namely,
\[
\chi(Y_{E_6})=2\chi(O)+\chi\left({D_1}_+\right)+\chi\left({D_2}_+\right),
\]
where ${D_i}_+$ denotes one of the smooth components of the pullback of $D_i$ to the orientifold
\[
X:(\xi^2=h)\subset \mathscr{L}.
\]
In particular, $D_1:(h+3\nu^2=0)$ pulls back to 
\[
{D_1}_+\cup {D_1}_-:\left((\xi+i\sqrt{3}\nu)(\xi-i\sqrt{3}\nu)=0\right)\subset \mathscr{L},
\]
and $D_2:(\chi^2-h\phi^2)$ pulls back to 
\[
{D_2}_+\cup {D_2}_-:\left((\chi+\xi \phi)(\chi-\xi \phi)=0\right)\subset \mathscr{L}.
\]
Both pullbacks are normal crossing divisors with two components, which physicists refer to as `brane-image-brane pairs'. But since our identities are at the level of constructible functions on the base $B$ we do not pullback the components of the limiting discriminant to the orientifold, and thus our identities look slightly different in the cases where a singular component of the limiting discriminant pulls back to a brane-image-brane pair. In particular, this phenomenon occurs in the $D_5$ case and the (unprimed) $E_7$ case as well.  
\end{remark}

We then conclude by listing the Chern class identities corresponding to the families of the other non-Weierstra{\ss} fibrations in Table~5, which are all just reflections of Theorem~\ref{VT} for the corresponding family\footnote{In each case, $S_j$ is the intersection of the singular locus of $D_j$ with the orientifold plane.}.

\begin{table}[hbt]\label{t5}
\begin{center}
\begin{tabular}{|c|c|}
\hline
        \text{Family}  & \text{Chern class identity following from Theorem~\ref{VT}} \\
\hline
$\mathscr{Y}_{E_7}$ & $c_{\text{SM}}\left(\varphi_*\mathbbm{1}_{Y_{E_7}}\right)=c_{\text{SM}}\left(2\mathbbm{1}_O+\mathbbm{1}_{D_3}-\mathbbm{1}_{S_3}\right)$ \\
\hline
$\mathscr{Y}'_{E_7}$ & $c_{\text{SM}}\left(\varphi_*\mathbbm{1}_{Y_{E_7}}\right)=c_{\text{SM}}\left(2\mathbbm{1}_O+\mathbbm{1}_{D_4}+\mathbbm{1}_{D_6}-\mathbbm{1}_{S_6}\right)$ \\
\hline
$\mathscr{Y}_{D_5}$ & $c_{\text{SM}}\left(\varphi_*\mathbbm{1}_{Y_{D_5}}\right)=c_{\text{SM}}\left(2\mathbbm{1}_O+\mathbbm{1}_{D_7}+\mathbbm{1}_{D_8}+\mathbbm{1}_{D_9}-\mathbbm{1}_{S_9}\right)$ \\
\hline
$\mathscr{Y}_{Q_7}$ & $c_{\text{SM}}\left(\varphi_*\mathbbm{1}_{Y_{Q_7}}\right)=c_{\text{SM}}\left(2\mathbbm{1}_O+2\mathbbm{1}_{D_{10}}-\mathbbm{1}_{S_{10}}+\mathbbm{1}_{D_{11}}-\mathbbm{1}_{S_{11}}\right)$ \\
\hline
\end{tabular}
\end{center}
\caption{Chern class identities following from Theorem~\ref{VT} applied to the corresponding family. In each case, integrating both sides of the Chern class identity yields the presumed tadpole relation predicted by F-theory/type IIB duality.}
\end{table}

\bibliographystyle{plain}
\bibliography{TR2}

\end{document}